\numberwithin{equation}{section} 
\numberwithin{figure}{section} 
\theoremstyle{plain}
\theoremstyle{plain}
\newtheorem{thm}{Theorem}[section]
\theoremstyle{plain}
\newtheorem{prop}[thm]{Proposition}
\theoremstyle{remark}
\theoremstyle{plain}
\theoremstyle{plain}
\theoremstyle{plain}
\theoremstyle{plain}
\begin{document}

\title{Numerical Computation of approximate Generalized Polarization Tensors}

\author{Yves Capdeboscq}

\address{Mathematical Institute, 24-29 St Giles, OXFORD OX1 3LB, UK}

\email{capdeboscq@maths.ox.ac.uk}

\author{Anton Bongio Karrman}

\address{Applied and Computational Mathematics, The California Institute of
Technology, Pasadena CA 91125, USA}

\author{Jean-Claude N\'ed\'elec}

\address{CMAP, CNRS  UMR 7641, Ecole Polytechnique, 91128 Palaiseau, France}

\begin{abstract}
In this paper we describe a method to compute Generalized Polarization
Tensors. These tensors are the coefficients appearing in the multipolar
expansion of the steady state voltage perturbation caused by an inhomogeneity
of constant conductivity. As an alternative to the integral equation
approach, we propose an approximate semi-algebraic method which is
easy to implement. This method has been integrated in a Myriapole, a  
matlab routine with a graphical interface which makes such computations 
available to non-numerical analysts.
\end{abstract}
\maketitle

\section{Introduction}

The classical electrical impedance tomography problem in two dimensions
can be described as follows. Suppose that $\Omega\subset\mathbb{R}^{2}$
is a bounded simply connected domain. Given a conductivity map $\gamma \in L^{\infty}(\Omega)$,
suppose that there exists a constant $c>0$ such that $c^{-1}\leq \gamma <c$.
Define the Dirichlet-to-Neumann map $\Lambda:H^{1/2}(\partial\Omega)\to H^{-1/2}(\partial\Omega)$
by \[
\Lambda(\phi):=\left.\gamma \frac{\partial u}{\partial\nu}\right|_{\partial\Omega},\]
where $u\in H^{1}(\Omega)$ is the voltage potential associated with
$\gamma$, that is, the unique solution of \begin{eqnarray*}
\mbox{div}(\gamma\nabla u) & = & 0\mbox{ in }\Omega\\
u & = & \phi\mbox{ on }\partial\Omega.\end{eqnarray*}
The electrical tomography problem is then to reconstruct $\gamma$ from
$\Lambda$. The fact that $\gamma$ is uniquely determined by $\Lambda$
has recently been established in dimension two \cite{ASTALA-PAIVARINTA-06},
and this question is still open in dimension three. It is known however
that, without additional assumptions on the conductivity, this problem
is extremely ill-posed \cite{ALESSANDRINI-97}. This ill-posedness
can be considerably reduced in several cases of practical importance
by the introduction of a priori information on the form of the conductivity
to be recovered. One such problem is the following: given complete
(or incomplete) knowledge of the Dirichlet-to-Neumann map, estimate
the internal conductivity profile under the assumption that $\gamma$ is constant
except at a finite number of small inhomogeneities. This problem,
and some variants, has been investigated by many authors (to name
a few, see \cite{AMMARI-KANG-04,BRULH-HANKE-VOGELIUS-03,CAPDEBOSCQ-VOGELIUS-03A,CEDIO-FENGYA-MOSKOW-VOGELIUS-98,FRIEDMAN-VOGELIUS-89}
and references therein). One important step in this estimation procedure
is the derivation of an asymptotic expansion of the voltage potential
$u$ in terms of the volume of the inhomogeneities. For an isolated
inhomogeneity, the inhomogeneity dependent parameters in the first
order of this expansion are called the Polarization Tensor (a constant
$2\times2$ matrix). The generalization of this tensor includes higher 
order terms dependent on the inhomogeneity via what is referred to as 
the Generalized Polarization Tensor (GPT) (see \cite{AMMARI-KANG-MMS-03,AMMARI-KANG-SIMA-03,AMMARI-KANG-07}) 
and was generalized to the case of linear elasticity (see \cite{AMMARI-KANG-NAKAMURA-TANUMA-02}). 

These tensors appear in various contexts. In homogenization theory,
they provide asymptotic expansions for dilute composites (see \cite{MILTON-02,AMMARI-KANG-TOUIBI-05}).
Very recently, GPT's have been used to reconstruct shape information
from boundary measurements \cite{AMMARI-KANG-LIM-ZRIBI-11} and to
construct approximate cloaking devices \cite{AMMARI-KANG-LEE-LIM-11}.
In these last two papers, the authors consider the following conductivity
transmission problem: \[
\begin{cases}
\mbox{div}\left(\left(\mathbf{1}_{D}\gamma_{1}+\left(1-\mathbf{1}_{D}\right)\gamma_{0}\right)\nabla u\right)=0 & \mbox{ in }\mathbb{R}^{2},\\
u(x)-H(x)=O(\left|x\right|^{-1}) & \mbox{ as }\left|x\right|\to\infty,\end{cases}\]
where $\gamma_{1}>0$ and $\gamma_{0}>0$ are constants, $\mathbf{1}_{D}$
is the characteristic function of a bounded domain $D\subset\mathbb{R}^{2}$
with a Lipschitz boundary and a finite number of connected components,
and $H$ is a given harmonic function. In the absence of the inclusion
$D$, $u=H$. Then, the far-field perturbation of the voltage potential
created by $D$ is given by \begin{equation}
u(x)-H(x)=-\sum_{|\alpha|,|\beta|=1}^{\infty}\frac{(-1)^{|\alpha|}}{\left|\alpha\right|!\left|\beta\right|!}\partial^{\alpha}\Gamma(x)M_{\alpha\beta}\partial^{\beta}H\left(x_{0}\right)\mbox{ as }\left|x\right|\to\infty,\label{eq:expan-gpt-1}\end{equation}
where $x_{0}$ is the center of mass of $D$, and $\Gamma$ is the
fundamental solution of the Laplacian, \[
\Gamma(x):=- \frac{1}{2\pi}\ln\left|x\right|.\]
The parameters $\alpha=(\alpha_{1},\alpha_{2})$ and $\beta=(\beta_{1},\beta_{2})$
are multi-indices, and $|\alpha|=\alpha_{1}+\alpha_{2}$. The real
valued tensors $M_{\alpha\beta}$, which depend on $D$ and on the conductivity
contrast $\gamma_{1}/\gamma_{0}$ are the 
so-called GPT. In bounded
domains, expansions similar to \eqref{eq:expan-gpt-1} are also available,
where $\Gamma$ is replaced by a suitable Green's Function (see \cite{AMMARI-KANG-04}).
To fix ideas, let us assume $x_{0}$ is the origin. As it is observed
in \cite{AMMARI-KANG-LEE-LIM-11}, the expansion \eqref{eq:expan-gpt-1}
can be simplified when expressed in terms of harmonic polynomials
which are real or imaginary parts of $(x_{1}+ix_{2})^{n}$ where $x=(x_{1},x_{2})$.
Introducing \[
a_{n}(x):=r^{n}\cos(n\theta)=\Re\left(\left(x_{1}+ix_{2}\right)^{n}\right),\quad b_{n}(x):=r^{n}\sin(n\theta)=\Im\left(\left(x_{1}+ix_{2}\right)^{n}\right),\]
if $H$ can be written in the form\[
H(x)=H(0)+\sum_{n=1}^{\infty}\alpha_{n}(H)a_{n}(x)+\beta_{n}(H)b_{n}(x),\]
then it is shown in \cite{AMMARI-KANG-LEE-LIM-11} that the expansion
\eqref{eq:expan-gpt-1} can be written under the form 
\begin{eqnarray*}
u(x)-H(x) & = & 
 \sum_{m=1}^{\infty}\frac{a_m(x)}{2\pi m|x|^{2m}}\sum_{n=1}^{\infty}\left(M\left(a_{m,}a_{n}\right)\alpha_{n}(H)+M\left(a_{m,}b_{n}\right)\beta_{n}(H)\right)\\
 &+  & \sum_{m=1}^{\infty}\frac{b_m(x)}{2\pi m |x|^{2m}}\sum_{n=1}^{\infty}\left(M\left(b_{m,}a_{n}\right)\alpha_{n}(H)+M\left(b_{m,}b_{n}\right)\beta_{n}(H)\right),\mbox{ as }|x|\to\infty.
\end{eqnarray*}
The authors of \cite{AMMARI-KANG-LEE-LIM-11}  refer to $M(P,Q)$, where $P$ and $Q$ are any of the harmonic
polynomials $a_{n}$ or $b_{n}$, as the contracted GPT. Such an expansion
is particularly relevant if the inclusion is observed on a sphere
$S_{R}$ (of radius $R$) centered at the origin. In such a case, it
is well known that $a_{n}$ and $b_{n}$ form a basis of $L^{2}(S_{R})$
- this is the Fourier series decomposition. The tensor coefficients $M\left(P,Q\right)$
are given by the formula
\begin{equation}\label{eq:def-MPQ}
M\left(P,Q\right)=\int_{\partial D}P(x){\phi^{Q}}(x)\, d\sigma(x), 
\end{equation}
where ${\phi^{Q}}$ solves the transmission problem
\begin{eqnarray}
\Delta {\phi^{Q}} & = & 0\mbox{ in }D\mbox{ and }\mathbb{R}^{2}\setminus\overline{D},\nonumber \\
\left.{\phi^{Q}}\right|_{+} & = & \left.{\phi^{Q}}\right|_{-}\mbox{ on }\partial D,\nonumber \\
\left.\frac{\partial {\phi^{Q}}}{\partial\nu}\right|_{+}-\frac{\gamma_{1}}{\gamma_{0}}\left.\frac{\partial {\phi^{Q}}}{\partial\nu}\right|_{-} & = & \frac{\partial Q}{\partial\nu}\mbox{ on }\partial D,\label{eq:trans1}\\
{\phi^{Q}} & \to & 0\mbox{ as }|x|\to\infty,\nonumber 
\end{eqnarray}
where $\nu$ is the normal pointing outside of $D$, and where the subscript $+$ (resp. $-$) refers to the limit taken along the normal from the outside 
(resp. the inside) of $D$. 

The subject of this paper is to detail a simple algorithm to compute
approximate values of the contracted GPT $M(P,Q)$. We focus on the
case when $\gamma_{1}$ is constant, and $D$ is a simply connected
inclusion with a $C^{1}$ boundary. In Section~\ref{sec:var}, we show that this
problem can be written in a weak form as a system of boundary integral equations on $\partial D^2:=\partial D \times \partial D$.
Such an approach is well-known amongt numerical analysts interested in integral equations. 
We use an integration by parts formula that can be found in \cite{COLTON-KRESS-98,NEDELEC-01}. 
In Section~\ref{sec:linear}, in the spirit of mixed finite-elements methods, we decouple the 
boundary voltage potential from the normal flux and approximate them in the
following way: 
\begin{eqnarray*}
\left.{\phi^{Q}}\right|_{-} 
& \in & \mbox{span}\left(a_{1},b_{1},\ldots,a_{N_{1}},b_{N_{1}}\right),\\
\left.\frac{\partial {\phi^{Q}}}{\partial\nu}\right|_{-} 
& \in & \mbox{span}\left(\frac{\partial}{\partial\nu}a_{1},\frac{\partial}{\partial\nu}b_{1},\ldots,\frac{\partial}{\partial\nu}a_{N_{2}},\frac{\partial}{\partial\nu}b_{N_{2}}\right),
\end{eqnarray*}
for a given a fixed $N_{1}$ and $N_{2}$, greater than or equal to the
order of $Q$. We verify that the corresponding linear system is invertible. 
There are two advantages of this approach. The first is that since $a_{n}$ and $b_{n}$ are 
explicit polynomials, their gradient can also be computed explicitly. The precision with 
which the normal is calculated depends on the parameterization of $D$, which is independent
of $N_{1}$ and $N_{2}$. The second is that the problem then involves
the computation of $4 (N_{1}+N_{2})^{2}$ integrals on $\partial D^2$,
and then the inversion of a $4 (N_{1}+N_{2})^{2}$ matrix. In other
words, this decouples the order of the contracted GPT from the discretization
of the boundary $D$. In Section~\ref{sec:num}, we discuss the results obtained
for disks and ellipses, for which the exact values of the contracted GPT
are known. Finally, in Section~\ref{sec:gui} , we present a Matlab routine (named Myriapole
on the Matlab Central exchange repository) that implements this approach.

\section{\label{sec:var}Weak boundary integral formulation of the transmission problem}
In this section, we derive the following variational formulation satisfied by the transmission problem.
\begin{prop}\label{pro:var}
Let ${\phi^{Q}}$ be the solution of the transmission problem \eqref{eq:trans1}. Introducing the notations
$$
{u^{Q}}:=\left. {\phi^{Q}} \right|_{\partial D} \in H^{1/2}(\partial D),\quad {v^{Q}}:=\left. \frac{\partial {\phi^{Q}}}{\partial \nu}\right|_{ -} \in H^{-1/2}(\partial D), \quad k=\frac{\gamma_1}{\gamma_0},
$$
the pair $({u^{Q}},{v^{Q}})$ satisfies for any $\phi\in H^{1/2}(\partial D)$, 
\begin{eqnarray}\label{eq:eq1-var}
\left(k+1\right) \int_{\partial D^2}  \frac{\partial {u^{Q}}}{\partial \tau} (y) \frac{\partial \phi}{\partial \tau} (x) \Gamma (x-y) \, d\sigma(x) d\sigma(y) 
+ 2 k \int_{\partial D^2}  {v^{Q}}(y) \phi(x) \frac{\partial \Gamma (x-y)}{\partial \nu_x} \, d\sigma(x) d\sigma(y) 
  \nonumber\\
= \frac{1}{2} \int_{\partial D} \frac{\partial Q}{\partial \nu}(x) \phi(x) \, d\sigma (x) 
- \int_{\partial D^2}  \frac{\partial Q}{\partial \nu}(y) \phi(x) \frac{\partial \Gamma (x-y)}{\partial \nu_x} \, d\sigma(x) d\sigma(y) \nonumber,
\end{eqnarray}
and for all $\psi \in H^{1/2}(\partial D)$, 
\begin{eqnarray}
\left(k+1 \right) \int_{\partial D^2}  {v^{Q}}(y) \psi(x) \Gamma (x-y) \, d\sigma(x) d\sigma(y)
 - 2 \int_{\partial D^2} {u^{Q}}(y) \psi(x) \frac{\partial \Gamma (x-y)}{\partial \nu_y} \, d\sigma(x) d\sigma(y) 
  && \nonumber\\
 = -\int_{\partial D^2} \frac{\partial Q}{\partial \nu}(y) \psi(x) \Gamma (x-y) \, d\sigma(x) d\sigma(y). \label{eq:eq2-var}
\end{eqnarray}
\end{prop}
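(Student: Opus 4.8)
The plan is to derive both integral identities from the transmission problem \eqref{eq:trans1} by representing the harmonic function $\phi^Q$ through single- and double-layer potentials and then taking traces on $\partial D$. First I would recall the single-layer potential $S\mu(x) = \int_{\partial D}\Gamma(x-y)\mu(y)\,d\sigma(y)$ and the double-layer potential $D\mu(x) = \int_{\partial D}\frac{\partial\Gamma(x-y)}{\partial\nu_y}\mu(y)\,d\sigma(y)$, together with their standard jump relations: the single-layer potential is continuous across $\partial D$ while its normal derivative jumps by $\pm\tfrac12\mu + K^*\mu$, and the double-layer potential jumps by $\mp\tfrac12\mu + K\mu$ across $\partial D$, where $K$ and $K^*$ are the adjoint boundary operators with kernel $\frac{\partial\Gamma(x-y)}{\partial\nu_x}$ (resp. $\frac{\partial\Gamma(x-y)}{\partial\nu_y}$). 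These relations are exactly the content of the integration-by-parts formula from \cite{COLTON-KRESS-98,NEDELEC-01} cited in the introduction.

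The key idea is to write $\phi^Q$ both inside and outside $D$ using Green's representation formula. Since $\phi^Q$ is harmonic in $D$ and in $\mathbb{R}^2\setminus\overline D$ with $\phi^Q\to 0$ at infinity, Green's identity gives in each region a representation in terms of the boundary trace $u^Q$ and the one-sided normal derivatives. Subtracting the interior and exterior representations, and using the transmission conditions in \eqref{eq:trans1} — namely continuity of the trace $\left.\phi^Q\right|_+ = \left.\phi^Q\right|_-=u^Q$ and the jump $\left.\frac{\partial\phi^Q}{\partial\nu}\right|_+ = k\, v^Q + \frac{\partial Q}{\partial\nu}$ — eliminates the exterior normal derivative and produces two boundary integral equations relating $u^Q$ and $v^Q$. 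The outside normal flux is expressed using the third line of \eqref{eq:trans1} solved for $\left.\frac{\partial\phi^Q}{\partial\nu}\right|_+$; substituting this everywhere is what generates the $(k+1)$ and $2k$ prefactors and the forcing terms involving $\frac{\partial Q}{\partial\nu}$.

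To obtain \eqref{eq:eq2-var} I would take the trace of the single-layer representation of $u^Q$ against a test function $\psi$, yielding the symmetric single-layer term with coefficient $(k+1)$, the double-layer term in $u^Q$ (coefficient $2$, from combining the two $\tfrac12$ jumps), and the single-layer forcing in $\frac{\partial Q}{\partial\nu}$. To obtain the first identity \eqref{eq:eq1-var} I would instead differentiate tangentially: the term $\int \frac{\partial u^Q}{\partial\tau}\frac{\partial\phi}{\partial\tau}\Gamma$ arises by integrating by parts the hypersingular operator applied to $u^Q$, using the two-dimensional identity that converts the normal-derivative-of-double-layer kernel into a tangential-derivative pairing against $\Gamma$ — this is precisely the integration-by-parts formula of \cite{NEDELEC-01} that regularizes the hypersingular kernel. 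The $\tfrac12$ factor in the forcing comes from the jump term evaluated on the boundary.

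The main obstacle will be handling the hypersingular operator in \eqref{eq:eq1-var} rigorously: the naive kernel $\frac{\partial^2\Gamma(x-y)}{\partial\nu_x\partial\nu_y}$ is not integrable, so the passage to the weak form $\int\frac{\partial u^Q}{\partial\tau}\frac{\partial\phi}{\partial\tau}\Gamma$ must be justified in the duality between $H^{1/2}$ and $H^{-1/2}$ rather than by pointwise manipulation. I would carry this out by first establishing the identity for smooth densities via the tangential integration-by-parts formula and then extending by density, relying on the $C^1$ regularity of $\partial D$ to control the boundary terms and to ensure the tangential derivative $\frac{\partial}{\partial\tau}$ maps $H^{1/2}$ to $H^{-1/2}$ continuously. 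The remaining terms are standard weakly singular or Cauchy-principal-value integrals for which the jump relations apply directly.
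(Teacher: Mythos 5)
Your proposal follows essentially the same route as the paper: Green's representation formulas for the interior and exterior potentials, the layer-potential jump relations to obtain two boundary integral equations, elimination of the exterior flux via the transmission condition $\left.\partial_\nu\phi^Q\right|_+ = k\,v^Q + \partial_\nu Q$ (which produces the $(k+1)$ and $2k$ factors), and the tangential integration-by-parts formula of Colton--Kress/N\'ed\'elec to replace the hypersingular kernel $\partial^2\Gamma/\partial\nu_x\partial\nu_y$ by the $\partial_\tau u^Q\,\partial_\tau\phi\,\Gamma$ pairing in \eqref{eq:eq1-var}. The only slight inaccuracy is your remark that the jump relations themselves are the content of that integration-by-parts formula (they are separate ingredients, the jumps being classical and cited to Folland in the paper), but you use both correctly where it matters.
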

\begin{proof}
In order to solve the  transmission problem of \eqref{eq:trans1}, we separate the interior potential from the exterior by defining the following:
\begin{equation}\label{eq:defui}
{u^{Q}}(x) :=  {\phi^{Q}}(x) \, \text{ for }\, x \in D, \mbox{ and } {\phi^{Q}}_{\rm ext}(x)  :=  {\phi^{Q}}(x)  \text{ for } \, x \in \mathbb{R}^2 \setminus \overline{D}.
\end{equation}
We then introduce the gradient fields as
\begin{equation}\label{eq:defvi}
{\mathbf{v}^{Q}}_{\rm int}(x)  :=  \nabla {u^{Q}}(x), \, \mbox{  and } \, {\mathbf{v}^{Q}}_{\rm ext}(x)  :=  \nabla {\phi^{Q}}_{\rm ext}(x),
\end{equation}
and set $v^{Q}= {\mathbf{v}^{Q}}_{\rm int}\cdot \nu$ on $\partial D$.
A representation of the solution using Green's identities leads to the expression of the interior solution and its 
gradient in terms of first and double layer potentials. For all $x\in D$, we have
\begin{eqnarray*}
{u^{Q}}(x) & = & \int_{\partial D} {u^{Q}}(y) \frac{\partial \Gamma (x-y)}{\partial \nu_y}  - v^{Q}(y) \Gamma (x-y)   \, d\sigma (y),\\ 
{\mathbf{v}^{Q}}_{\rm int}(x) & = & \int_{\partial D} {u^{Q}}(y) \nabla_x  \left(\frac{ \partial \Gamma (x-y)}{\partial \nu_y}\right)  
-  v^{Q}(y)  \nabla_x \Gamma (x-y)   \, d\sigma (y),
\end{eqnarray*}
As $x \rightarrow \partial D$ from the interior, using the jump conditions satisfied by single and double layer potentials \cite{FOLLAND-95}, we obtain for $x \in \partial D$,
\begin{eqnarray}
\label{eq:a}
\frac{1}{2}{u^{Q}}(x) &=&\int_{\partial D} {u^{Q}}(y) \frac{\partial \Gamma (x-y)}{\partial \nu_y}  - v^{Q}(y) \Gamma (x-y)   \, d\sigma (y), \\
\label{eq:b}
\frac{1}{2} v^{Q} (x) &=& \int_{\partial D} {u^{Q}}(y) \frac{ \partial^2 \Gamma (x-y)}{\partial \nu_x \partial \nu_y} 
- v^{Q}(y) \frac{\partial \Gamma (x-y)}{\partial \nu_x} \, d\sigma (y).
\end{eqnarray}
Similarly, for all $x\in  \mathbb{R}^2 \setminus \overline{D} $ we have
\begin{eqnarray*}
{\phi^{Q}}_{\rm ext}(x) & = &  -\int_{\partial D} {\phi^{Q}}_{\rm ext}(y) \frac{\partial \Gamma (x-y)}{\partial \nu_y}  
- \left({\mathbf{v}^{Q}}_{\rm ext}(y) \, \cdot \, \nu \right)\Gamma (x-y)   \, d\sigma (y), \\ 
{\mathbf{v}^{Q}}_{\rm ext}(x) & = &  -\int_{\partial D} {\phi^{Q}}_{\rm ext}(y) \nabla_x  \left(\frac{ \partial \Gamma (x-y)}{\partial \nu_y}\right)  
 - \left({\mathbf{v}^{Q}}_{\rm ext}(y) \, \cdot \, \nu \right) \nabla_x \Gamma (x-y)   \, d\sigma (y).
\end{eqnarray*}
And passing to the limit as $x \rightarrow \partial B$ for the exterior, we obtain 
\begin{eqnarray}
\label{eq:c}
\frac{1}{2}{\phi^{Q}}_{\rm ext}(x)  &=&  - \int_{\partial D} {\phi^{Q}}_{\rm ext}(y) \frac{\partial \Gamma (x-y)}{\partial \nu_y}  - \left({\mathbf{v}^{Q}}_{\rm ext}(y) \, \cdot \, \nu \right) \Gamma (x-y)   \, d\sigma (y), \\
\label{eq:d}
 \frac{1}{2} {\mathbf{v}^{Q}}_{\rm ext}(x) \, \cdot \, \nu  &=&    - \int_{\partial D} {\phi^{Q}}_{\rm ext}(y) \frac{ \partial^2 \Gamma (x-y)}{\partial \nu_x \partial \nu_y}
 -  \left({\mathbf{v}^{Q}}_{\rm ext}(y) \, \cdot \, \nu \right) \frac{\partial \Gamma (x-y)}{\partial \nu_x} \, d\sigma (y), 
\end{eqnarray}
We subtract \eqref{eq:c} from \eqref{eq:a}  and use \eqref{eq:trans1} and \eqref{eq:defui}  to obtain for $x \in \partial D$,
\begin{eqnarray}\label{eq:test1}
2 \int_{\partial D} {u^{Q}}(y) \frac{\partial \Gamma (x-y)}{\partial \nu_y} \, d\sigma (y) - \left(1+ k \right) \int_{\partial D} v^{Q}(y) \Gamma (x-y) \, d\sigma (y) \\
 = \int_{\partial D} \frac{\partial Q}{\partial \nu}(y) \Gamma (x-y) \, d\sigma (y). \nonumber
\end{eqnarray}
Similarly, from  \eqref{eq:b} and \eqref{eq:d} we have,  for $x \in \partial D$,
\begin{eqnarray}\label{eq:test2}
2 k \int_{\partial D} v^{Q}(y) \frac{\partial \Gamma (x-y)}{\partial \nu_x} \, d\sigma (y) - \left(1+k \right) \int_{\partial D} {u^{Q}}(y) \frac{ \partial^2 \Gamma (x-y)}{\partial \nu_x \partial \nu_y} \, d\sigma (y) \\
= \frac{1}{2} \frac{\partial Q}{\partial \nu}(x) - \int_{\partial D} \frac{\partial Q}{\partial \nu}(y)  \frac{\partial \Gamma (x-y)}{\partial \nu_x}  \, d\sigma (y). \nonumber
\end{eqnarray}
We test \eqref{eq:test1} against $\phi \in H^{1/2}(\partial D) $ and obtain
\begin{eqnarray*}
2 k \int_{\partial D^2}  {v^{Q}}(y) \phi(x) \frac{\partial \Gamma (x-y)}{\partial \nu_x} \, d\sigma(x) d\sigma(y) +\left(1+k\right) \int_{\partial D^2}  \frac{\partial {u^{Q}}}{\partial \tau} (y) \frac{\partial \phi}{\partial \tau} (x) \Gamma (x-y) \, d\sigma(x) d\sigma(y) \\ 
= \frac{1}{2} \int_{\partial D} \frac{\partial Q}{\partial \nu}(x) \phi(x) \, d\sigma (x) 
- \int_{\partial D^2}  \frac{\partial Q}{\partial \nu} (y)\phi(x) \frac{\partial \Gamma (x-y)}{\partial \nu_x} \, d\sigma(x) d\sigma(y).
\end{eqnarray*}
We have used the following integration by parts formula proved in \cite{COLTON-KRESS-98,NEDELEC-01},
$$
-\int_{\partial D^2} {u^{Q}}(y) \phi(x) \frac{ \partial^2 \Gamma (x-y)}{\partial \nu_x \partial \nu_y} \, d\sigma(x) d\sigma(y) = \int_{\partial D^2}  \frac{\partial {u^{Q}}}{\partial \tau} (y) \frac{\partial \phi}{\partial \tau} (x) \Gamma (x-y) \, d\sigma(x) d\sigma(y).
$$
Finally, we test  \eqref{eq:test2}  against $\psi \in H^{1/2}(\partial D)$ to obtain
\begin{eqnarray*}
2 \int_{\partial D^2} {u^{Q}}(y) \psi(x) \frac{\partial \Gamma (x-y)}{\partial \nu_y} \, d\sigma(x) d\sigma(y)
-\left(1+k \right) \int_{\partial D^2}  {v^{Q}}(y) \psi(x) \Gamma (x-y) \, d\sigma(x) d\sigma(y) \\
 = \int_{\partial D^2} \frac{\partial Q}{\partial \nu}(y) \psi(x) \Gamma (x-y) \, d\sigma(x) d\sigma(y).
\end{eqnarray*}
which concludes the proof.
\end{proof}
To conclude this section, we provide two equivalent formulations for the tensor $M(P,Q)$ in terms of the unknowns $u^Q$ and $v^Q$.
\begin{prop}\label{pro:formulaMPQ}
The contracted GPT defined in \eqref{eq:def-MPQ} is also given by
$$
M(P,Q)  =  (k-1)\int_{\partial D} P(x) \left(\frac{\partial Q}{\partial \nu}(x)+(k-1)v^Q(x) \right) \, d\sigma(x), 
$$
and
$$
M(P,Q)=  (k-1)\int_{\partial D}  \frac{\partial P}{\partial \nu}(x) \left(Q+(k-1) u^Q(x)\right) \, d\sigma(x).
$$
\end{prop}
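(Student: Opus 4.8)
The plan is to derive the second identity from the first by integration by parts, so that the substantive step is to produce the first formula from the definition \eqref{eq:def-MPQ}. The tools for the reduction are the two instances of Green's second identity available because $P$, $Q$ and the restriction of $\phi^Q$ to $D$ are all harmonic in $D$: applied to the pairs $(P,\phi^Q)$ and $(P,Q)$ on $D$ they give
\[
\int_{\partial D}P\,v^Q\,d\sigma=\int_{\partial D}\frac{\partial P}{\partial\nu}\,u^Q\,d\sigma,\qquad \int_{\partial D}P\,\frac{\partial Q}{\partial\nu}\,d\sigma=\int_{\partial D}\frac{\partial P}{\partial\nu}\,Q\,d\sigma.
\]
Granting the first formula, I would expand $(k-1)\int_{\partial D}P(\partial_\nu Q+(k-1)v^Q)\,d\sigma$, replace $\int_{\partial D} P\,\partial_\nu Q$ and $\int_{\partial D} P\,v^Q$ using these two relations, and collect the terms into $(k-1)\int_{\partial D}\partial_\nu P\,(Q+(k-1)u^Q)\,d\sigma$, which is exactly the second formula. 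This half is purely algebraic.

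For the first formula the key observation is that the factor $\partial_\nu Q+(k-1)v^Q$ is nothing but the jump of the normal derivative of $\phi^Q$ across $\partial D$ prescribed by the transmission problem. Writing $w^Q:=\partial_\nu\phi^Q|_+$ for the exterior normal derivative, the flux condition in \eqref{eq:trans1} reads $w^Q=k\,v^Q+\partial_\nu Q$, hence
\[
w^Q-v^Q=\frac{\partial Q}{\partial\nu}+(k-1)v^Q.
\]
Thus proving the first formula amounts to showing that $M(P,Q)$, as given by \eqref{eq:def-MPQ}, equals $(k-1)\int_{\partial D}P\,(w^Q-v^Q)\,d\sigma$, i.e.\ that it is the integral of $P$ against the normal-flux jump, carrying the contrast prefactor $(k-1)$.

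To obtain this I would use the single and double layer representation of $\phi^Q$ already exploited in the proof of Proposition~\ref{pro:var}, together with Green's second identity in the exterior domain $\mathbb{R}^2\setminus\overline D$ applied to $P$ and $\phi^Q$ on the region between $\partial D$ and a large circle $\partial B_R$. Since both functions are harmonic there the volume term vanishes, but because $P$ grows like $|x|^{m}$ while $\phi^Q$ decays, the circle term does not: expanding $\phi^Q$ in the decaying harmonics $|x|^{-2m}a_m$ and $|x|^{-2m}b_m$ and using orthogonality on $\partial B_R$, it converges as $R\to\infty$ to a fixed multiple of the multipole coefficient of $\phi^Q$ associated with $P$. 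Identifying that coefficient with the one carrying $M(P,Q)$ in the contracted far-field expansion then fixes both the constant $(k-1)$ and the overall sign, giving $M(P,Q)=(k-1)\int_{\partial D}(P\,w^Q-u^Q\,\partial_\nu P)\,d\sigma$; inserting the interior identity $\int_{\partial D} u^Q\,\partial_\nu P=\int_{\partial D} P\,v^Q$ recorded above turns this into $(k-1)\int_{\partial D}P(w^Q-v^Q)\,d\sigma$, which is the first formula.

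The delicate point, and the one I would treat most carefully, is precisely this last computation: the non-vanishing boundary contribution at infinity in the exterior Green's identity is where the conductivity contrast enters and where the prefactor $(k-1)$ and the sign are pinned down. Everything else — the equivalence of the two stated formulas and the reading of $\partial_\nu Q+(k-1)v^Q$ as the flux jump dictated by \eqref{eq:trans1} — reduces to Green's second identity for harmonic functions together with the transmission condition, and is routine.
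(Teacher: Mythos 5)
Your proposal is correct, but it does not follow the paper's route, because the paper does not prove the first formula at all: it simply cites \cite[Lemma 3.3]{AMMARI-KANG-04} for it, and then notes --- exactly as you do --- that the second formula follows from the first by integration by parts, which is your pair of Green identities in $D$ for $(P,\phi^Q)$ and $(P,Q)$. So your reduction of the second formula to the first coincides with the paper's argument, while your derivation of the first formula is a genuine, self-contained replacement for the citation. It is also sound: reading $\partial_\nu Q+(k-1)v^Q$ as the flux jump $w^Q-v^Q$ is exactly the transmission condition in \eqref{eq:trans1}; Green's second identity on $B_R\setminus\overline D$ gives $\int_{\partial D}\bigl(P\,w^Q-u^Q\,\partial_\nu P\bigr)\,d\sigma=\int_{\partial B_R}\bigl(P\,\partial_r\phi^Q-\phi^Q\,\partial_r P\bigr)\,d\sigma$, and for $P=a_m$ Fourier orthogonality evaluates the circle term as $-2\pi m\,c_m$ independently of $R$, where $c_m$ is the coefficient of $|x|^{-m}\cos m\theta$ in the exterior expansion of $\phi^Q$; since the perturbation of the transmission solution is $u-Q=(k-1)\phi^Q$, the far-field expansion pins $c_m$ to $M(P,Q)/(k-1)$ up to sign, and your formula then follows from your interior Green identity. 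What your argument buys is transparency: it shows where the prefactor $(k-1)$ originates (the layer density generating $u-Q$ is $(k-1)(w^Q-v^Q)$), and it makes explicit that \eqref{eq:def-MPQ} must be read as the integral of $P$ against that density rather than against the trace of the transmission solution, which is continuous across $\partial D$ and would give a different quantity. What the paper's citation buys is brevity, at the price of matching conventions with the reference. One caution on the step you yourself flag as delicate: the contracted expansion printed in this paper's introduction is off by a sign from \eqref{eq:expan-gpt-1} (test both on a disk: the exterior field coefficient is $\rho^{2n}(1-k)/(1+k)$, while $M(a_n,a_n)=2\pi n\rho^{2n}(k-1)/(k+1)$), so the identification of $c_m$ must be anchored to \eqref{eq:expan-gpt-1}, to the references, or to the disk computation; done that way, it yields exactly the sign you state.
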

\begin{proof}
The first formula is proved in \cite[Lemma 3.3]{AMMARI-KANG-04}. The latter is obtained after an integration by parts of the former.
\end{proof}

\section{\label{sec:linear}Approximate resolution method}

To solve the system given in \eqref{eq:eq1-var}, we restrict it to a finite-dimensional space. More precisely,
given $N_1$ and $N_2$ two positive integers, we set $n_p:= 2N_1$ and $n_q:= 2N_2$ and define
$$
P_i:=
    \begin{cases}
     a_{\frac{i+1}{2}} &\mbox{ when } i\leq n_{p}  \mbox{ is odd}, \\
     b_{\frac{i}{2}} &\mbox{ when } i\leq n_{p}  \mbox{ is even},
    \end{cases}
$$
and
$$
Q_i:=\begin{cases}
     \nabla a_{\frac{i+1}{2}} \cdot \nu &\mbox{ when } i\leq n_{q}  \mbox{ is odd}, \\
     \nabla b_{\frac{ i}{2}} \cdot &\mbox{ when }  i\leq n_{q}  \mbox{ is even}, 
    \end{cases}
$$
and we consider \eqref{eq:eq1-var} as a system posed on 
$$
\mathcal{H}=\mbox{span}\left(P_1,\ldots,P_{n_{p}}\right) \times  \left(Q_1,\ldots,Q_{n_{q}}\right).
$$
We write 
$$
u^Q=\sum_{n=1}^{2 N_1} c_n P_n,\mbox{ and } v^Q=\sum_{n=1}^{2N_2} d_n Q_n,
$$
and thus \eqref{eq:eq1-var} becomes a linear system of the form $\mathcal{M} X = \mathcal{B}$, where $X$ is a $(n_p +n_q)\times n_q $ matrix, 
where for $q\leq k\leq N_2$, the $k$-th column contains the coefficients the of $(u^{a_k},v^{a_k})$ and the $N_2+k$-th column the coefficients of
 $(u^{b_k},v^{a_k})$.  
For $1\leq i,j \leq n_{p}$,
$$
M_{i,j} =  (k+1) \mathcal{P}_{i,j}, \mbox{ with } 
\mathcal{P}_{i,j} = \int_{\partial D^2}  \frac{\partial {P_i}}{\partial \tau} (y) 
\frac{\partial {P_j}}{\partial \tau} (x) \Gamma (x-y) \, d\sigma(x) d\sigma(y).
$$
For $1\leq i \leq  n_{p}$ and $n_{p} +1 \leq j \leq n_{p} +n_{q}$
$$
M_{i,j} = 2 k \mathcal{N}_{i,j}, \mbox{ with } 
\mathcal{N}_{i,j} =  \int_{\partial D^2}  Q_{j-n_{p}}(y) P_{i}(x) \frac{\partial \Gamma (x-y)}{\partial \nu_x} \, d\sigma(x) d\sigma(y).
$$
For $n_{p}+1 \leq i \leq n_{p}+n_{q}$, and $1\leq j \leq n_{p}$,
$$
M_{i,j} = - 2 \int_{\partial D^2}  Q_{j-n_{p}}(x) P_{j}(y) \frac{\partial \Gamma (x-y)}{\partial \nu_x} \, d\sigma(x) d\sigma(y)  = -2 \mathcal{N}_{j,i}.
$$
For $n_{p}+1 \leq i,j \leq n_{p}+n_{q}$, 
$$
M_{i,j} = (k+1) \mathcal{Q}_{i,j}, \mbox{ with } \mathcal{Q}_{i.j} = \int_{\partial D^2}  Q_{j-n_{p}}(x)Q_{i-n_{p}}(y) \Gamma (x-y) \, d\sigma(x) d\sigma(y).
$$
The right-hand-side term $\mathcal{B}$ is a  $(n_p +n_q)\times n_q $ matrix, given  when  $1\leq i \leq n_p$ by
$$
\mathcal{B}_{i,j}= 
\frac{1}{2} \int_{\partial D} Q_{j}(x) P_{i}(x) \, d\sigma (x) 
- \int_{\partial D^2}  Q_{j}(y) P_i(x) \frac{\partial \Gamma (x-y)}{\partial \nu_x} \, d\sigma(x) d\sigma(y),
$$
and when $n_{p}+1\leq i \leq n_p +n_q$,
$$
\mathcal{B}_{i,j}= -\int_{\partial D^2} Q_{j}(y) Q_{j-n_{p}}(x) \Gamma (x-y) \, d\sigma(x) d\sigma(y).
$$

Note that both $\mathcal{P}$ and $\mathcal{Q}$ are symmetric, negative definite provided $|x-y|<\kappa<1$, that is, provided the perimeter of $D$ is smaller than $2 \kappa$. 
This assumption is not restrictive: we can scale $D$ to be sufficiently small for the computations. In that case, since we established that the 
matrix $\mathcal{M}$ is of the form
$$
\mathcal{M} = \left [
\begin{array}[c]{cc}
\left(k+1\right) \mathcal{P} & 2 k \mathcal{N} \\ \\
- 2 \mathcal{N}^T & \left(k+1\right) \mathcal{Q}
\end{array}
\right ],
$$ 
it has as a positive determinant and therefore this linear-system is well-posed.
For any $i$ and $j$ such that $\max(i,j)\leq \min(n_p,n_q)$, the contracted GPT coefficients are then computed using Proposition~\ref{pro:formulaMPQ}, namely
$$
M(P_i,P_j) = (k-1) \int_{\partial D} P_i (x) \left( Q_{j}(x) + (k-1) \sum_{m=1}^{n_q} X_{m,n_{p}+j} Q_{m}(x)\right) d\sigma(x),
$$           
or equivalently by
$$
M(P_i,P_j) = (k-1) \int_{\partial D} Q_i (x) \left( P_{j}(x) + (k-1) \sum_{m=1}^{n_q} X_{m,j} P_{m}(x)\right) d\sigma(x).
$$
For a fixed pair $(i,j)$, increasing the number $n_p$ and $n_q$ improves the accuracy of the method, to a point.
Note that $P$, $Q$ and $\Gamma$ are explicitly given in terms of $x$ and $\nu(x)$. To compute the integrals which appear in $\mathcal{M}$, $\mathcal{B}$ and finally for 
$M$, we mostly need to choose a parameterization of $\partial D$, and to compute its normal. 
If $\partial D$ is given by an analytic formula, these integrals can all be computed using symbolic computation software. In this sense, the method is
partially algebraic. For non-explicit curves $\partial D$, the only non-routine step is the evaluation of the double integrals involving $\Gamma(x-y)$ and 
$\nabla \Gamma (x-y) \cdot \nu(x)$ on the line-segments (or on the arcs) where $x-y$ cancels (or is smaller than a threshold). However, approximations for these integrals
 can be computed "by hand," thanks to the explicit form of the integrands.
\section{\label{sec:num}Numerical results for disks and ellipses}

In order to test our method, we consider geometries for which exact formulae for the contracted GPT are known, 
that is, disks and ellipses.
If the inclusion is a disk, the contracted GPT is diagonal, and given by
\begin{equation}\label{eq:fordisk}
M(P_{2i-1},P_{2j-1})=M(P_{2i},P_{2j})=\delta_{ij} \frac{2 i \pi}{k+1} \left(\frac{\left|D\right|}{\pi}\right)^{i}.
\end{equation}
The formula for ellipses is slightly more involved, but also explicit (see \cite[Prop. 4.7]{AMMARI-KANG-07}).
In this case, an algebraic approach (as described above) would provide exact results.  Thus we use these shapes as a first benchmark for more 
complex geometries, for which the analytic formulae do not exist or are unknown.  We  compute the integrals appearing in Section~\ref{sec:linear} using a 
midpoint rule and fixing $N_1=N_2$. In a first experiment, for a perfect disk of radius $.5$, we study the influence of the resolution of the boundary discretization and the number of polynomials used 
in the basis to compute the contracted GPT up to order 4.
\begin{figure}[h]
\includegraphics[width=0.5\linewidth]{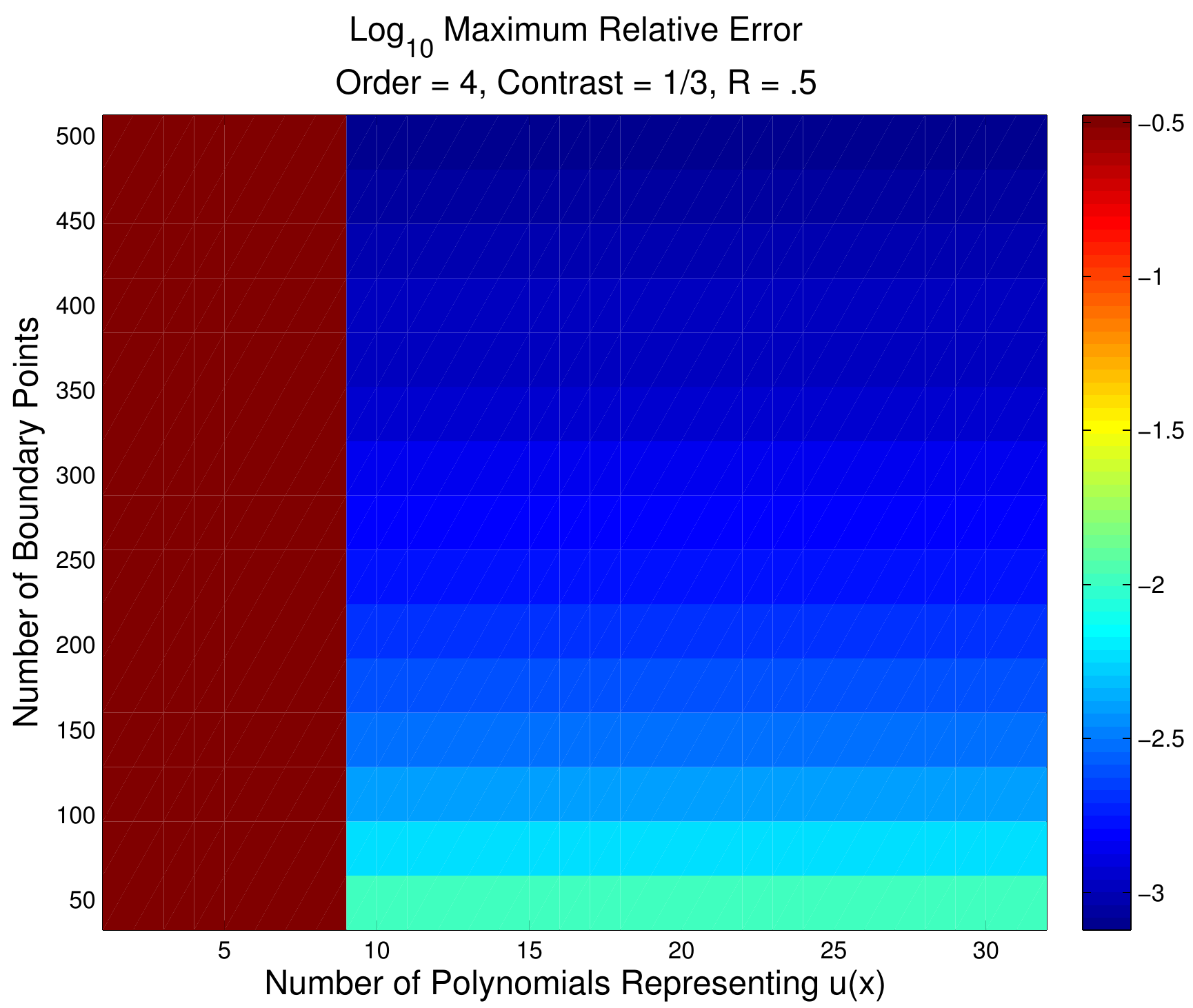}
\caption{\label{fig:disk} Relative error for GPT up to order $4$.}
\end{figure}
Figure ~\ref{fig:disk} shows a plot of the result. The value represented is the maximal relative absolute difference between the approximate tensor and the exact tensor.  
For a tensor $M'$ approximating $M$, the relative error  is given by 

\begin{equation} 
\epsilon =\displaystyle\max\limits_{i,j} \frac{ \left| M_{ij}'-M_{ij} \right| }{\displaystyle\max\limits_{k,l \geq \min(i,j)} \left| M_{kl} \right|}.
\end{equation} 
As $i$ and/or $j$ increases,  $\max\limits_{k,l \geq \min(i,j)} \left| M_{kl} \right|$ decreases: this scaling factor compensates the variation 
of tensor's coefficient size.
Figure~\ref{fig:disk} shows that the error behaves as we could expect. If the number $N$  of harmonic polymonial used is less than twice the order 
(in these results, $n=4$), the results are incorrect.  Note that the harmonic polynomials come in pairs as real and imaginary parts of $(x_1+ix_2)^m$, thus it makes sense to expect accurate 
results for tensors of order $n$ only if the maximal degree ($m$ in this case) of the representation of $u$ is greater than $n$ ($m>n$).
Once this threshold is passed, the error decreases with the number of discretization points independently of the number of polynomials used.

With the same geometry and the same order, we use $9$ harmonic polynomials, and test the behavior of this method as the contrast increases, for various discretizations. 
The result is shown in  Figure~\ref{fig:contrast}. Higher contrast $k>1$ leads to greater errors, but the error remains bounded when the contrast tends to zero.
\begin{figure}[h]
\includegraphics[width=0.5\linewidth]{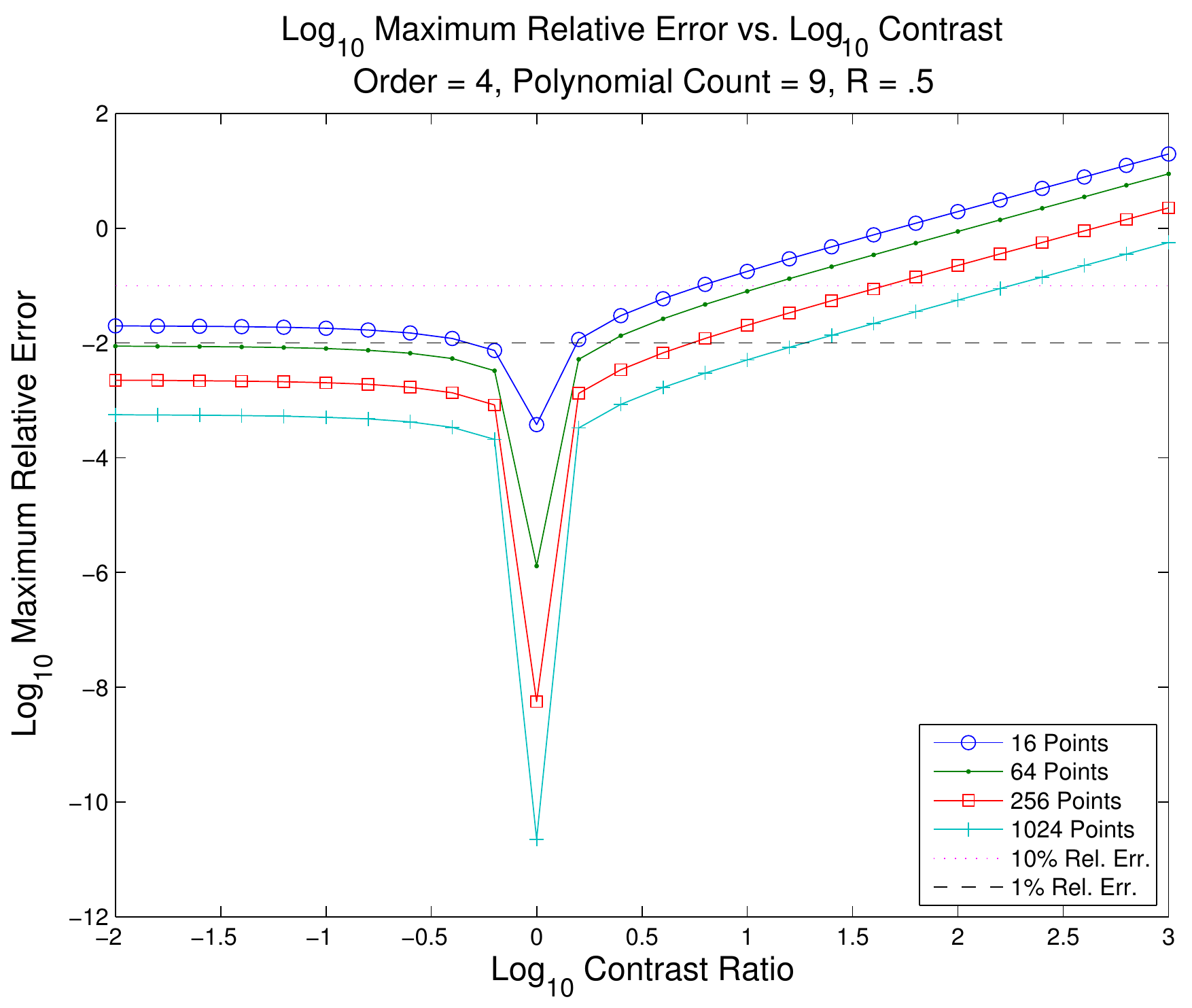}
\caption{Error vs. conductivity ratio.  The error is bounded for $k \leq 1$ and increases with $k>1$. 
For a point-count of $1024$ and $k\approx 10$, the  relative error is less than $1\%$.}
\label{fig:contrast}
\end{figure}
With the same geometry, and a contrast of $1/3$, we now fix the number of polynomials used as a function of the order 
($2n+1$ polynomials, where $n$ is the tensor order) and increase the order.
The result is shown in Figure~\ref{fig:order}. The image on the right represents the same data as that on the left but with a different scale on the vertical axis 
to show more detail. 
When too few points are used to discretize the boundary, that is,  $16$ or $32$ points, the left plot shows that the ill-conditioning of the linear  system causes extremely
divergent results. For the other cases, the results are consistent.
For tensors up to order $28$, a parameterization point-count of $256$ points gives relative errors of less than $1$ percent for this disk case.
\begin{figure}[h]
\includegraphics[width=0.47\linewidth]{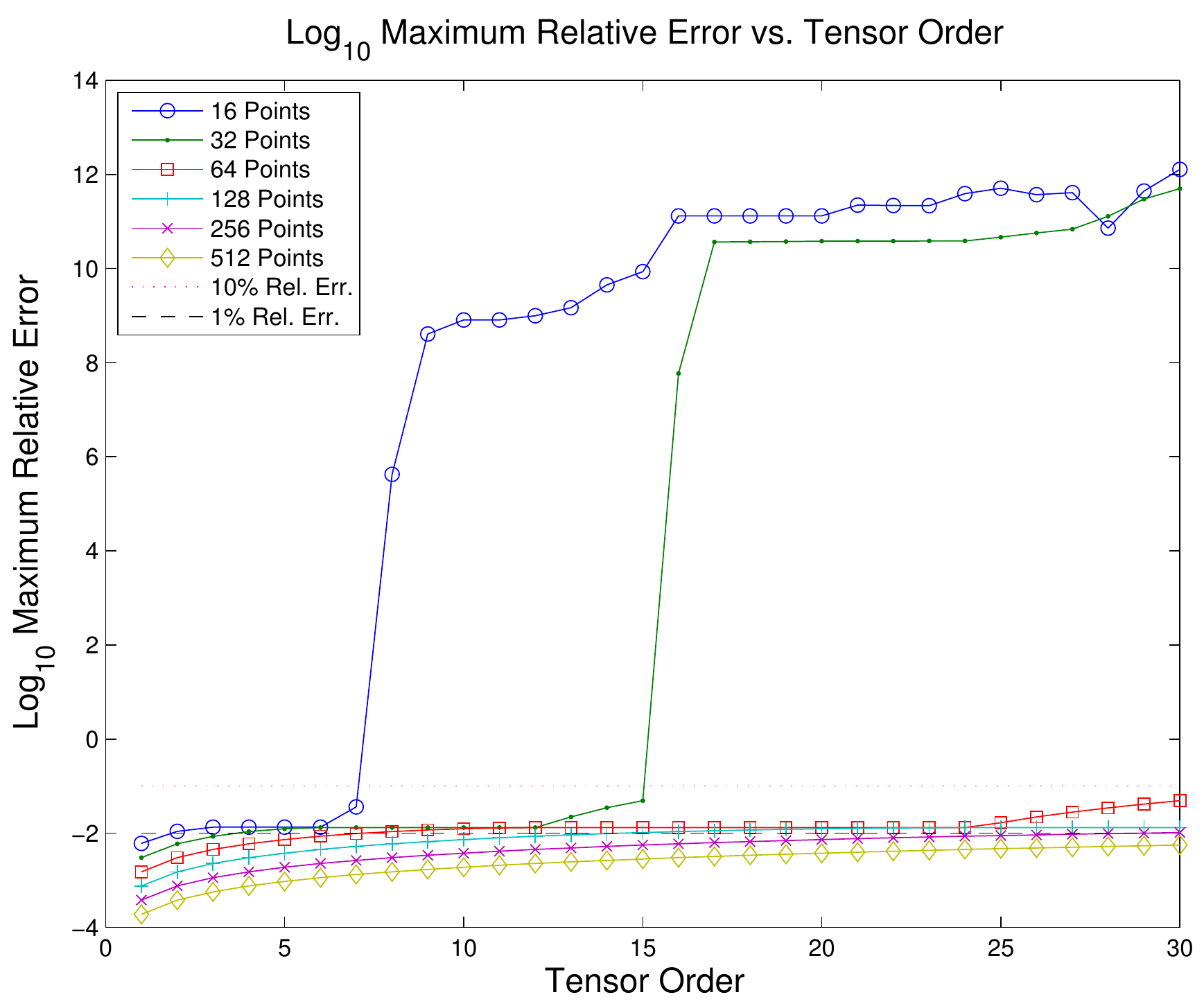}
\includegraphics[width=0.47\linewidth]{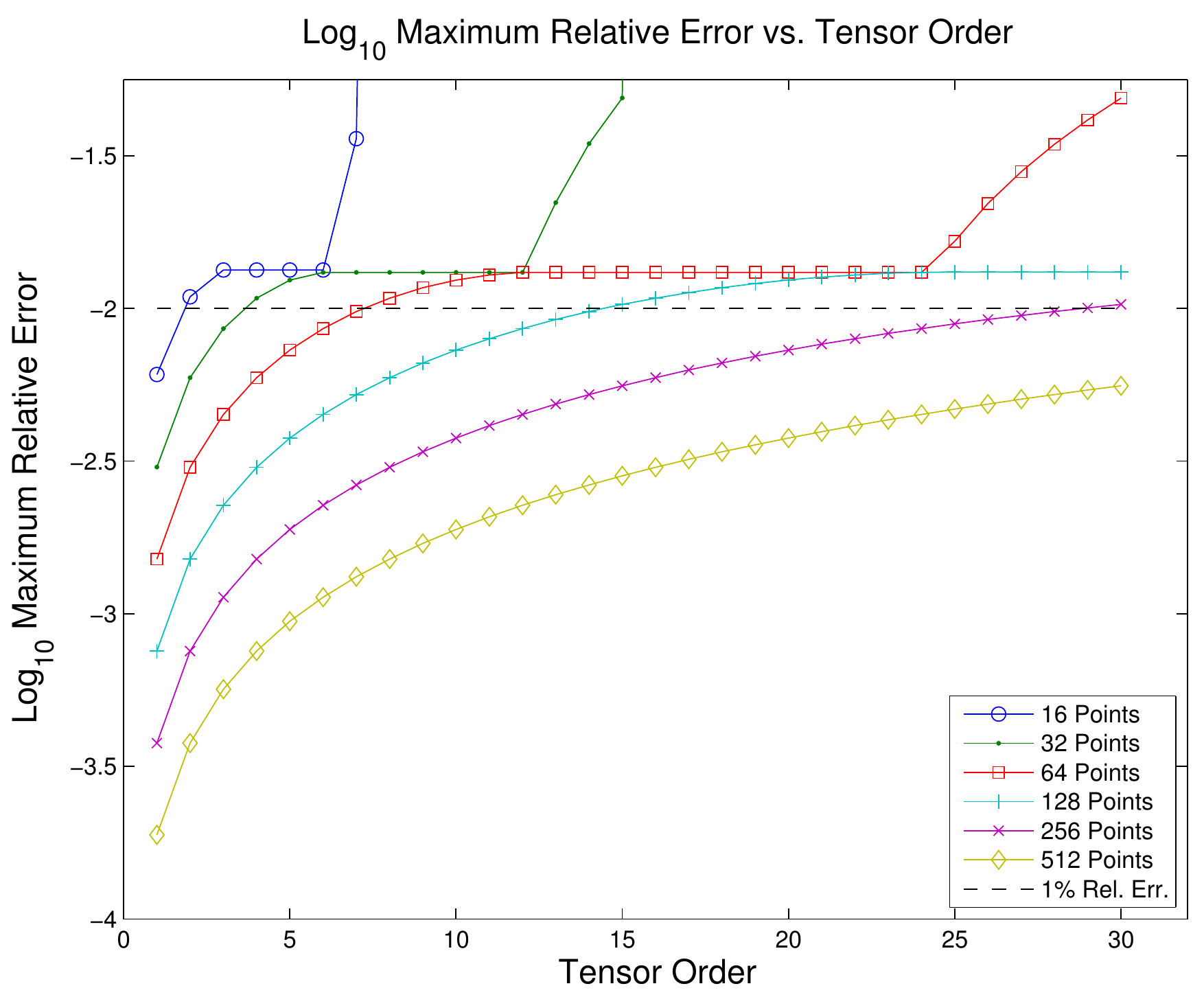} 
\caption{Results for higher order tensors for a disk of radius one and conductivity ratio equal to $1/3$.}
\label{fig:order}
\end{figure}

\bigskip{}

Moving away from a perfect disk, we now investigate how the algorithm fares with ellipses of high eccentricity.  
Again, fourth order tensors were calculated and the same error measures were used.  The ellipse with semiaxis 
lengths of $a=.005$ and $b=.5$ gives vastly different results compared to the results from the disk.  
The first difference to note is that with 32 points, the approximation is not even close to the exact tensor.  

The first plot in Figure~\ref{fig:ellipse} shows the expected general trend that for larger polynomial-count 
and boundary point-count, error is smaller.  However, there is an interesting phenomenon for the case of high 
point-count; as the number of polynomials representing $u$ and $v$ increases, error attains a minimal value but 
then after a certain point begins to increase.  This is most likely due to floating point errors in Matlab that 
round the high-degree polynomials off and then add this error across each boundary segment at points on the more 
pointed edges of the ellipse.  

The second plot in Figure~\ref{fig:ellipse} gives another clear example of the 
inaccuracies of the ellipse for 32 boundary points, the increased error for $k>0$, and the existence of an "optimal" 
polynomial-count for parameterizations of high point-count.  Also, the difference between $k<1$ and $k>1$ is clear in this case, as can be seen in the second plot.

\begin{figure}[h]
\includegraphics[width=0.5\linewidth]{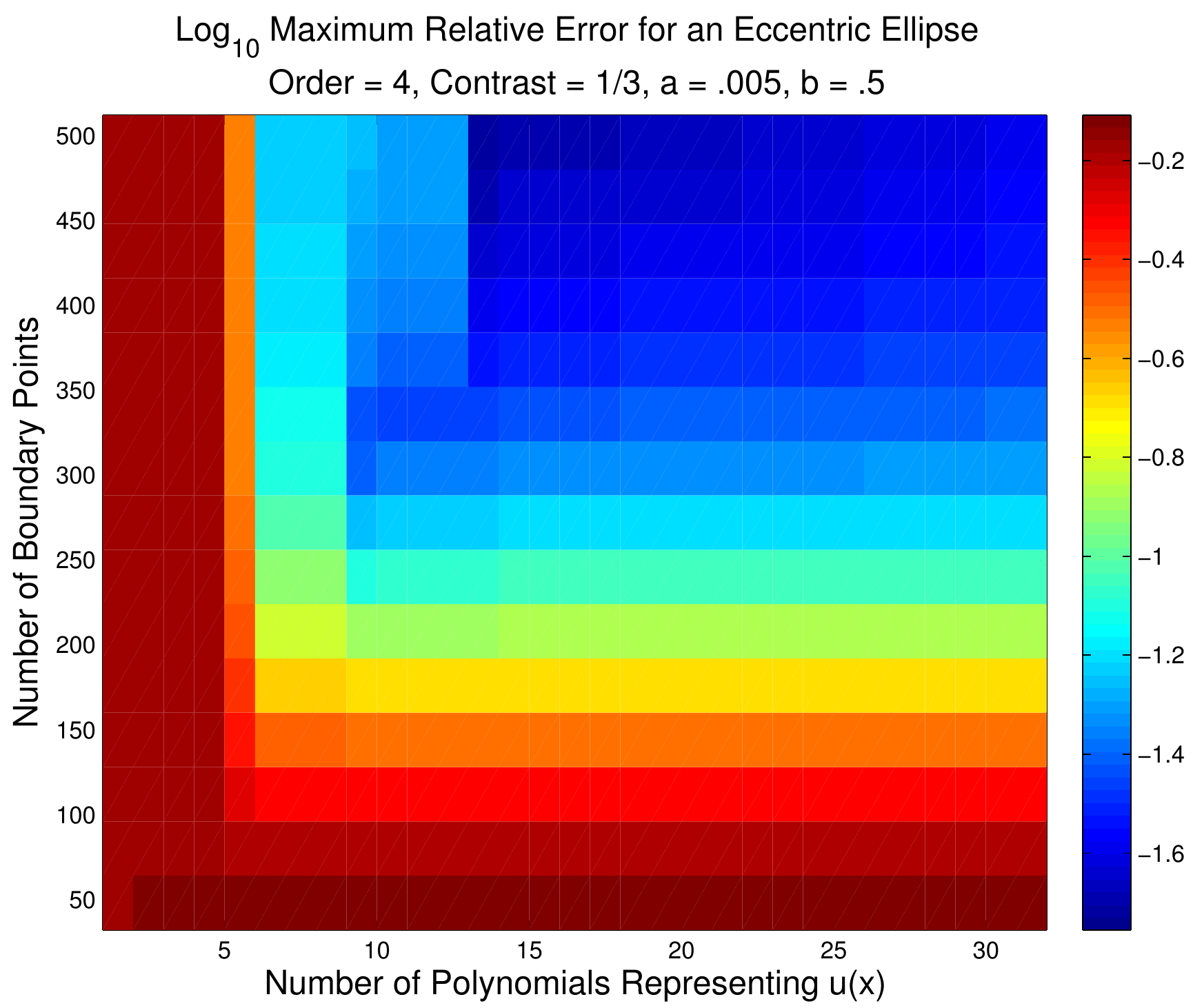}
\includegraphics[width=0.5\linewidth]{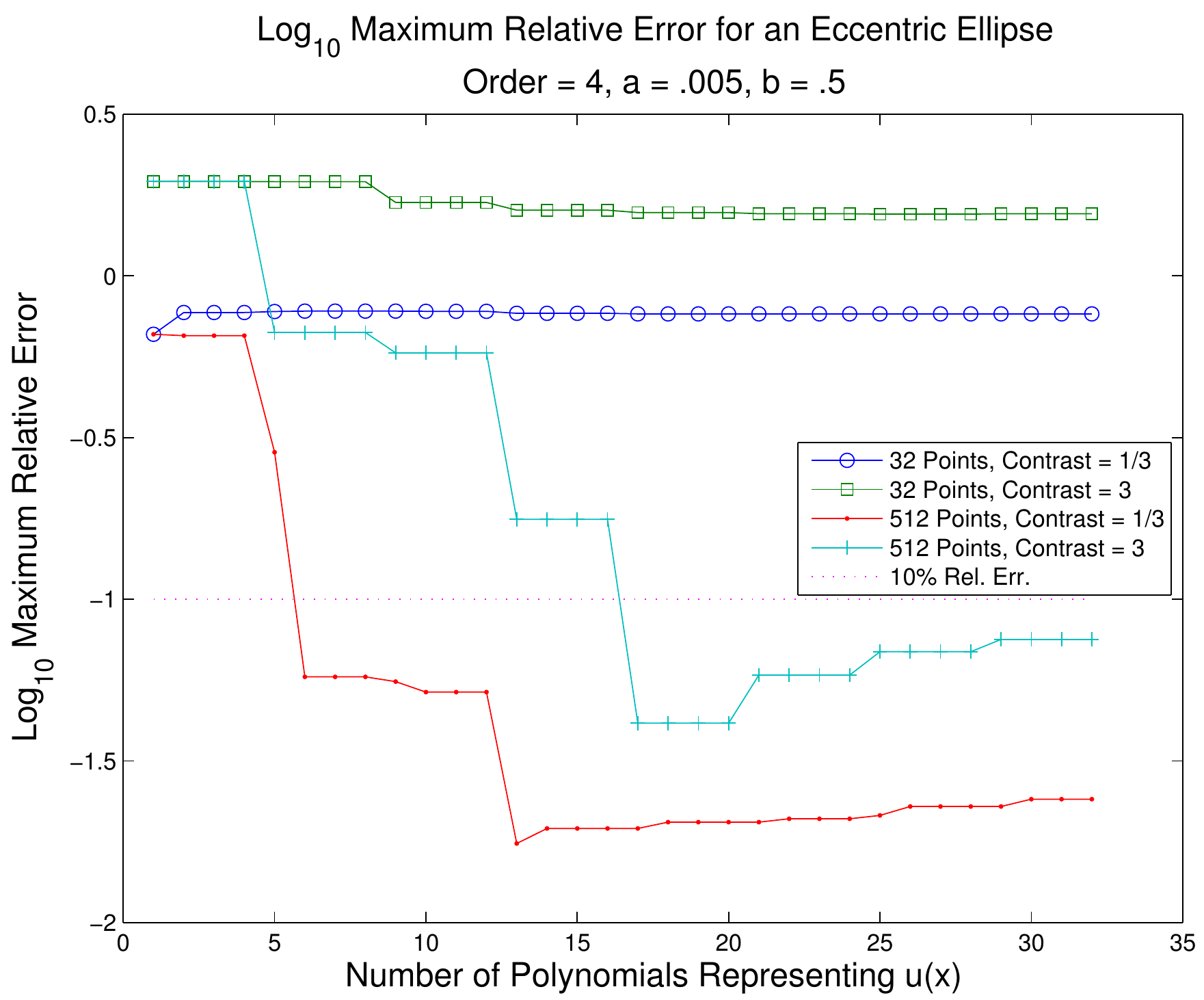}
\caption{Results for ellipse of considerable eccentricity with semiaxes $a = .01$ and $b = 1$.  The general region where the relative error is below $10$ percent is where 
polynomial-count is greater than $10$ and point-count is greater than $200$. }
\label{fig:ellipse}
\end{figure}

Lastly, we considered the amount of time taken to calculate tensors for the unit disk.  
On a 64-bit, 2.3 GHz machine, Matlab computed fourth order tensors according to the results given in Table ~\ref{table:time}.

\begin{table}[ht]
\caption{Calculation Time (seconds): Timing results for a tensor of order $1$ and $10$ each using two different polynomial-counts: one equal to the minimal required amount, and one equal to twice the minimal required amount.}
\centering 
\begin{tabular}{c c | c c c} 
\hline\hline 
Tensor Order & Number of Polynomials & 16 Points & 256 Points & 1024 Points \\ [0.5ex] 
\hline 
1 & 3 & .0149 &.0340 & .8890  \\ 
 & 6 & .0188 & .0467 & 1.1463 \\
\hline 
10 & 21 & .0308 &.1063 & 2.7794 \\
 & 42 & .3055 & .4712 & 5.4148 \\
\hline
\end{tabular}
\label{table:time} 
\end{table}

\section{\label{sec:gui}A Matlab package with graphical user interface}
To extend the algorithm, we have developed a Matlab graphical user interface that calculates contracted GPT's of any shape. 
The interface provides an efficient and user-friendly implementation of the algorithm described above and allows the user 
to easily change parameters used in the calculation.  In this context, it also serves as a useful tool for further investigation 
into the accuracy of the computation for disks and ellipses. It is available from the Matlab Central File Exchange server under 
the name 'Myriapole'.

\begin{figure}[h]
\includegraphics[scale=.3]{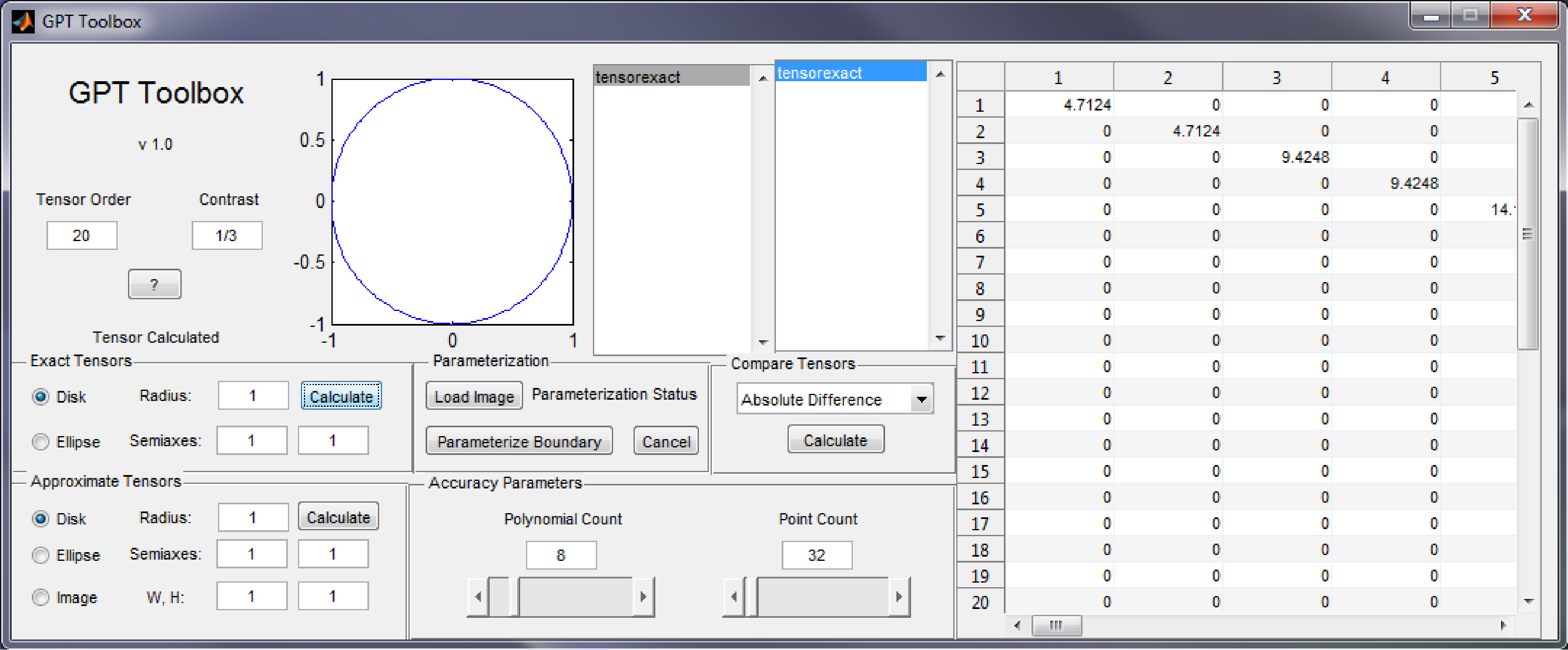}
\caption{The layout of the graphical user interface Myriapole.}
\label{fig:gui}
\end{figure}

The principal parameters of contracted GPT's are its order and conductivity contrast.  
Order in this situation refers to the highest power of $x_1+ix_2$ appearing in the set of polynomials $P$ used 
for the calculation of $M(P_i,P_j)$.  Therefore, a contracted tensor of order $n$ will be a matrix of size 
$2n \times 2n$, where each odd row and column correspond to $P_m = \Re ((x_1+ix_2)^m)$ while each even row and 
column correspond to $P_m = \Im ((x_1+ix_2)^m)$.   The tensor order and contrast, along with the boundary 
shape and size, are referred to as tensor properties and are separate from the other "approximation" parameters in the graphical user interface (GUI), 
namely the number of polynomials used to represent $u$ or the number of points in the parameterization of the 
boundary; the tensor properties change the values of the exact tensor while the approximation parameters change the accuracy of the 
approximating tensor.

The  panels of the GUI separate its different functions. For disks and ellipses, one can use the interface 
to calculate both exact and approximate tensors.  Exact tensors are calculated using analytical formulas 
while the approximate ones use the algorithm described in this paper. This is useful for testing the effects 
the input parameters have on the accuracy, and in fact a separate panel in the interface can be used to
 easily assess the errors of an approximately calculated GPT.   For any exact tensor $M$ and approximating
 tensor $M'$, the included measures viewable in the GUI are the absolute difference $|M'-M|$, relative absolute difference
 and $L^p$ matrix norms $\left(\sum_{ij}  |M_{ij}'-M_{ij}|^p\right)^\frac{1}{p}$ for $p=1,2,$ and $\infty$.

Beyond the capability to test the algorithm using disks and ellipses, the GUI also allows the user to import 
and calculate the GPT of  simple shapes described in a bitmap image file.  The easiest way to calculate tensors 
for shapes in this manner is to draw the filled shape in black on some white background in a graphic utility 
and then import this image into the interface.  A quick algorithm parameterizes the shape and thus allows the 
user to calculate its approximate tensor.   

\section{Concluding remarks}
We have introduced a simple method to compute contracted Generalized Polarization tensors, together with a 
graphical user  interface to test it. We verified on benchmark cases, disks and ellipses, that the resolution 
method was accurate, and was quite robust in the case of ellipses with very large aspect ratio.
This code is available to anyone, and can be downloaded from the Matlab Central file exchange repository. 
Paired with other routines that complete the asymptotic expansions  of domains marked with inhomogeneities 
of contrasting conductivity, this Graphical User Interface or the underlying code can function as a useful 
tool in implementations topological optimization methods, inverse conductivity tomography algorithms, 
calculations involving effective conductivity of dilute composites, and others. It is possible that this 
approach can be extended to compute anisotropic generalized polarization tensors. We hope that this 
semi-algebraic tensor calculation method will inspire similar numerical methods for related transmission 
and layer potential problems.

\section*{Acknowledgements }
Yves Capdeboscq is supported by the EPSRC Science and Innovation award to the Oxford Centre for Nonlinear PDE (EP/E035027/1). 
This work was completed in part while Anton Bongio Karrman was visiting OxPDE during a Summer Undergraduate Research Fellowship (SURF) 
awarded by the California Institute of Technology in 2010, and he would like to thank the Centre for the wonderful time he had there.

 \bibliographystyle{plain}
 \bibliography{/home/capdeboscq/Desktop/Dropbox/TeX/Mybib}

\end{document}